\def\E{{\mathbb E\,}}
\def\P{{\mathbb P}}
\def\N{{\mathbb N}}
\def\SS{\mathcal S}
\def\be#1\ee{\begin{equation}#1\end{equation}}
\def\PP{{\mathcal P}}
\def\E{{\mathbb E}}
\def\MM{{\mathcal M}}
\def\N{{\mathbb N}}
\def\P{{\mathbb P}}
\newtheorem{thm}{Theorem}
\newtheorem{prop}[thm]{Proposition}
\newenvironment{proof}[1][] {\noindent {\bf Proof#1:} }{\hspace*{\fill}$\square$\medskip\par}
\title{On the maximum of random assignment process}
\author{M.A.Lifshits$^{a, *}$, A.A.Tadevosian$^{b}$}
\date{%
  {\footnotesize{
    $^a$Saint-Petersburg State University, University Emb. 7/9, 199034, St Petersburg, Russia, \texttt{mikhail@lifshits.org} \\%
    $^b$Saint-Petersburg State University, University Emb. 7/9, 199034, St Petersburg, Russia, \texttt{tadevosiaan@yandex.ru}\\
    $^*$Corresponding author
    \\[2ex]%
    }}
}
\begin{document}

\maketitle
\begin{abstract}
We describe the behavior of the expectation of the maximum for a  random assignment process  built upon a square matrix with independent entries.
Under mild assumptions on the underlying distribution, the answer is expressed in terms of its quantile function.
\end{abstract}

\textbf{Keywords}: Assignment problem, Random assignment

\section{Introduction} 

We consider the following \textit{random assignment problem}.  Let ($X_{ij}$) be an $n\times n$ random matrix with i.i.d. random entries having a common distribution $\mathcal{P}$. Let $\SS_n$ denote the group of permutations $\pi : \{1, 2, \dots, n\} \to \{1, 2, \dots, n\}$. For every $\pi\in \SS_n$ let
\[
   S(\pi)=\sum\limits_{i=1}^{n} X_{i\pi(i)}.
\]

We call such process $\{S(\pi),\, \pi \in \SS_n\}$ a \textit{random assignment process}. We are interested in the study of asymptotic behaviour of $\E \max\limits_{\pi\in \SS_n} S(\pi)$ as $n \to \infty$.


We refer to \cite{ CoppersmithSorkin,SteeleProbTheoryAndCombOpt} for many applications of assignment problem in various fields of mathematics.
\medskip

The setting with $(X_{ij})$ uniformly distributed on $[0, 1]$ was studied by Steele \cite{SteeleProbTheoryAndCombOpt} and M\'ezard and Parisi \cite{ RandomLinkMatching}, where the authors 
proved that
\[
    \E{\min\limits_{\pi\in \SS_n} S(\pi)} = \zeta(2) -\frac{\zeta(2)/2 + 2\zeta(3)}{n} + O\left(\frac{1}{n^2}\right), \text{ as } n\to \infty,
\]
$\zeta(\cdot)$ being  Riemann's zeta function. 
M\'ezard et al. \cite{SpinGlassTheory} also conjectured that in the exponential case  
when $\mathcal{P} = \mathrm{Exp}(1)$ it is true that
\[
    \E{\min\limits_{\pi\in \SS_n} S(\pi)} \to \zeta(2), \text{ as } n\to \infty.
\]
Using replica method from statistical physics \cite{DotsenkoReplica:1993}, they provided an heuristical argumentation in favor of this conjecture.

Later Parisi \cite{Parisi98aconjecture} conjectured the following explicit expression for every fixed $n$
\[
    \E{\min\limits_{\pi\in \SS_n} S(\pi)} =\sum\limits_{k=1}^{n} \frac{1}{k^{2}}.
\]
It is compatible with exact results for $n=1, 2$ and for $\zeta(2)$ limit conjecture as $n\to\infty$. This conjecture was confirmed by numeric simulations for $n=3, 4, 5$.  Special cases and generalizations are also studied in \cite{SorkinMN, BuckChan, Dotsenko_2000}.
General version of formula above was proved independently by Nair et. al \cite{NairGeneralization} and by Linusson and Wästlund \cite{LinussonGeneralization}. Later, Wästlund proposed a simpler proof \cite{WaestlundEasyProof}.

Aldous \cite{AldousZet2} gave a rigorous proof of $\zeta(2)$ limit in the exponential case. His approach is based on the assignment analysis of an infinite tree with edges provided with exponentially distributed weights, see \cite{AldousAsymp}. In the same work he showed that the distribution $\mathcal{P}$ affects $\lim_{n}\E\,{\min_{\pi} S(\pi)}$ only through the value of its probability density function at 0, thus results for $\mathcal{P} = U[0, 1]$ and $\mathcal{P} = \mathrm{Exp}(1)$ are equivalent in asymptotic sense.

In \cite{SorkinMN, BuckChan, CoppersmithSorkin, LinussonGeneralization, NairGeneralization, WaestlundEasyProof} similar problems were investigated for rectangular matrices.

The situation is very different when one deals with the variables having unbounded
distributions $\PP$. For the study of minima described above this would mean
that $\P(X_{ij}<r)>0$ for every real $r$.  In the following we find more natural
to switch from minima to maxima  and to assume that $\P(X_{ij}>r)>0$ for every real $r$. For such case the expectation of maxima does not tend anymore
to a finite limit but grows to infinity and the problem is to evaluate the order
of this growth.

In this direction, Mordant and Segers \cite{MordantSegers} showed that in Gaussian case $\mathcal{P} = \mathcal{N}(0, 1)$  it is true that 
\[
    \E{\max\limits_{\pi\in \SS_n} S(\pi)} = n \sqrt{2 \log{n}} (1 + o(1)),
\]
and 
 proposed a greedy method giving random permutation at which the permutation process attains the value asymptotically equivalent to the expectation of maximum.
 Essentially the same results for Gaussian case were obtained independently by
 the authors \cite{Lifshits2021GAP} who also used the greedy method but the priority of  \cite{MordantSegers} in its preprint version must be of course acknowledged here.  In their preprint,  Cheng et al. \cite{TTkocz} considered assignment processes on more general  graph structures and more general random
 variables satisfying some special assumptions. 
\medskip

In this paper our goal is to provide some mild and natural conditions imposed on the distribution $\mathcal{P}$ in order to obtain asymptotic expression for
expected maximum of the random assignment process. 

For notation convenience let $(X_i), X$ be some auxiliary i.i.d. random variables having the same distribution $\mathcal{P}$ and denote $M_n = \max_{1\leq i\leq n} X_i$ the maximum of i.i.d. random variables. Also let us denote $\mathcal{M}_n = \max_{\pi \in \SS_n} S(\pi)$ the maximum of the assignment process built on matrix $(X_{ij})$. Let us define the quantile function by
\[
  g(p) := \inf \{ r: {\mathbb P}(X\ge r) \le p\}, \qquad 0<p<1.
\]
 Our main result is the following theorem:

\begin{thm}\label{t:main_asymptotics} 
Assume that

1) $ \E |X|<\infty$.

2) At zero the function $g(\cdot)$ tends to infinity and is slowly varying.
\medskip

Then
\begin{equation} \label{e11}
   \E M_n = g(1/n)\, (1+o(1)), \qquad \textrm{as } n\to\infty,
\end{equation}
and
\begin{equation} \label{e12}
    \E \mathcal{M}_n = n \, g(1/n) \, (1+o(1)),  \qquad \textrm{as } n\to\infty.
\end{equation}
\end{thm}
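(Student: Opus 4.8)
The plan is to prove \eqref{e11} first and then to sandwich $\E\mathcal{M}_n$. Write $\overline F(r):=\P(X\ge r)$, so that $g$ is the decreasing generalized inverse of $\overline F$. The single input I extract from the slow variation of $g$ is a comparison principle: since $g(\lambda/n)/g(1/n)\to1$ for every fixed $\lambda>0$, raising the quantile level by a constant factor shifts the tail across the scale $1/n$. Concretely, for every $\eps>0$ one has $n\,\overline F\big((1+\eps)g(1/n)\big)\to0$ and $n\,\overline F\big((1-\eps)g(1/n)\big)\to\infty$; both follow by comparing the level with $g(\lambda/n)$ for $\lambda$ large, resp. small, and using monotonicity of $g$.

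For \eqref{e11}, fix $\eps>0$ and put $K_\pm=(1\pm\eps)g(1/n)$. From $\P(M_n>K_+)\le n\,\overline F(K_+)$ and $\P(M_n<K_-)\le\exp(-n\,\overline F(K_-))$ the comparison principle yields $M_n/g(1/n)\to1$ in probability. To pass to expectations, split $\E M_n=\E[M_n;M_n\le K_+]+\E[M_n;M_n>K_+]$: the first term is at most $K_+$ and, combined with the lower tail and the bound $\E M_n^-\le\E X^-<\infty$ (valid since $M_n\ge X_1$), pins the leading order, provided the second term is $o(g(1/n))$. For the latter I use $M_n\mathbf 1_{\{M_n>K_+\}}\le\sum_i X_i\mathbf 1_{\{X_i>K_+\}}$ to reduce to $n\,\E[(X-K_+)^+]$; writing $X\eqinlaw g(U)$ with $U$ uniform gives $\E[(X-K_+)^+]=\int_0^{q}(g(s)-K_+)\,ds$ with $q=\overline F(K_+)$, and a Potter-bound/Karamata estimate shows $\int_0^q g(s)\,ds\sim q\,g(q)$, whence this integral is $o(q\,g(1/n))$ and, since $nq\to0$, $n\,\E[(X-K_+)^+]=o(g(1/n))$. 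This is exactly where $\E|X|<\infty$ enters, as it makes $g$ integrable near $0$; letting $\eps\to0$ gives \eqref{e11}.

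For \eqref{e12} the upper bound is immediate from the deterministic domination $\mathcal{M}_n\le\sum_i\max_j X_{ij}=\sum_i R_i$, where each row maximum $R_i\eqinlaw M_n$, so $\E\mathcal{M}_n\le n\,\E M_n=(1+o(1))\,n\,g(1/n)$ by \eqref{e11}; thus all the content lies in the matching lower bound. Fix $\eps>0$, set $K=(1-\eps)g(1/n)$ and $p=\overline F(K)$, so $np\to\infty$. Form the random bipartite graph with an edge $(i,j)$ whenever $X_{ij}\ge K$; its edges are i.i.d. Bernoulli$(p)$. Because $np\to\infty$, with high probability this graph carries a matching missing only $\delta_n n$ rows with $\delta_n\to0$, every edge of which has weight $\ge K$. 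Completing it to a permutation and routing the $o(n)$ leftover rows through a second, dense matching (entries above a fixed quantile, hence edge probability bounded away from $0$) keeps each leftover entry above a constant, so the total is $\ge(1-\delta_n)nK-C\delta_n n=(1-\eps-o(1))\,n\,g(1/n)$ with high probability; letting $\eps\to0$ finishes \eqref{e12}.

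The crux is this lower bound. First, the graph-theoretic input must be near-perfect matching for the regime $np\to\infty$ rather than a perfect matching, because for very slowly varying $g$ one may have $np=(\log n)^{o(1)}\ll\log n$, below the perfect-matching threshold; establishing a matching of size $n-o(n)$ (via Hall's condition after deleting low-degree vertices) is the main technical step, and I expect it to be the hard part. Second, the two matchings are read off the same matrix, so I must reveal the entries in stages, or argue by a union bound, to keep the residual estimate honest. Finally, to convert the high-probability inequality into one for $\E\mathcal{M}_n$ I would control the exceptional event through the trivial bound $\mathcal{M}_n\ge\sum_i X_{ii}$, using $\E|X|<\infty$ and the smallness of the failure probability to make its contribution $o(n\,g(1/n))$.
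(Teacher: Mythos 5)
Your proof of \eqref{e11} is sound but takes a genuinely different route from the paper's. The paper bounds $\E M_n$ directly through the tail integrals $\int_0^\infty\P(M_n\ge r)\,dr$, using $R\,\P(M_n\ge R)$ with $R=g(V/n)$ for the lower bound and a Potter-type bound on $g$ (via the Karamata representation) to show $\int_R^\infty\P(M_n\ge r)\,dr=o(R)$ for the upper bound with $R=g(1/n)$. You instead prove $M_n/g(1/n)\to 1$ in probability and then control $\E[M_n;\,M_n>K_+]$ by $n\,\E[X\mathbf 1_{\{X>K_+\}}]$ together with the Karamata integral theorem $\int_0^q g(s)\,ds\sim q\,g(q)$ and the inequality $g(\P(X\ge K_+))\le K_+$. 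Both arguments rest on the same regular-variation machinery and use $\E|X|<\infty$ in the same two places (to kill the negative part and to make $g$ integrable at $0$); yours needs a little extra care at the decomposition $\E[X\mathbf 1_{\{X>K_+\}}]=K_+\P(X>K_+)+\E[(X-K_+)^+]$, where the first term must also be shown to be $o(g(1/n)/n)$, but that follows from $n\,\P(X\ge K_+)\to 0$. This part I would accept.

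The gap is in the lower bound for \eqref{e12}. Your plan hinges on the assertion that the bipartite graph of entries exceeding $K=(1-\eps)g(1/n)$, with i.i.d. Bernoulli$(p)$ edges and $np\to\infty$, contains a matching covering $n-o(n)$ rows with high probability; you yourself flag this as ``the main technical step'' and do not prove it. The statement is true, but as written the argument is incomplete, and the patching step has a concrete obstruction you only half-acknowledge: if $I$ and $J$ are the rows and columns left uncovered by a \emph{maximum} matching, then by maximality there are no edges between $I$ and $J$, i.e.\ every entry of the leftover $|I|\times|J|$ submatrix is forced to lie below $K$; these entries are not fresh, and the ``second dense matching'' cannot be run on them as if they were i.i.d. samples from $\PP$. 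The paper avoids all of this with the greedy permutation $\pi^*(i)=\arg\max_{j\notin\pi^*([i-1])}X_{ij}$: since row $i$ is independent of the columns excluded by the previous rows, $L_i=\max_{j\notin\pi^*([i-1])}X_{ij}$ is distributed exactly as $M_{n-i+1}$, so $\E\MM_n\ge\sum_{i=1}^n\E M_i\ge(n-[\delta n])\,\E M_{[\delta n]}$, and monotonicity of $\E M_k$ plus slow variation of $g$ give $n\,g(1/n)(1+o(1))$ in a few lines. I recommend you either adopt that construction or supply a full proof of the near-perfect matching lemma together with a two-stage exposure scheme (e.g.\ splitting the columns into two halves, one per matching) that makes the patching step legitimate.
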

\medskip

The structure of the paper is as follows: 
In Section \ref{s:max_indep} we study the maxima of an i.i.d. sequence
and prove \eqref{e11}. 
Then, in  Section \ref{s:MM} we  establish bilateral relations between the maximum of the assignment process and that of the corresponding i.i.d. sequence in order to derive \eqref{e12} from \eqref{e11}.
Finally, we provide expectation asymptotics of the assignment process maximum
for some standard probability distributions.


\section{Maximum's expectation in i.i.d. case}
\label{s:max_indep}

We split the proof of \eqref{e11} into two parts -- a lower bound and an upper bound.

\subsection{Lower bound}
\begin{prop} Under theorem's assumptions, it is true that
\[
   \E M_n \ge  g(1/n) \, (1+ o(1)),\quad \quad as\,\,n\to \infty.
\]
\end{prop}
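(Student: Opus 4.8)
The plan is to show that the normalized maximum $M_n/g(1/n)$ stays above $1-\epsilon$ with probability tending to $1$, and then to convert this into the expectation bound by controlling the (possibly negative) contribution of the event where $M_n$ is small. Throughout write $\bar F(r):=\P(X\ge r)$ for the tail function, so that $g$ is its generalized inverse, and fix $\epsilon\in(0,1)$ together with the threshold $t_n:=(1-\epsilon)\,g(1/n)$, which is positive for all large $n$ since $g(1/n)\to\infty$ by assumption 2).

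The core step is the tail estimate $n\,\bar F(t_n)\to\infty$, and I would prove it by contradiction. If this failed, there would exist a subsequence $(n_k)$ and a constant $C$ with $n_k\,\bar F(t_{n_k})\le C$, hence $\bar F(t_{n_k})\le C/n_k$. Then $t_{n_k}$ belongs to $\{r:\bar F(r)\le C/n_k\}$, so by the very definition of the generalized inverse $g(C/n_k)\le t_{n_k}=(1-\epsilon)\,g(1/n_k)$, that is $g(C/n_k)/g(1/n_k)\le 1-\epsilon$. But slow variation of $g$ at zero, applied with the fixed scale $\lambda=C$, forces $g(C/n_k)/g(1/n_k)\to 1$, a contradiction for large $k$. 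The advantage of this route is that it invokes slow variation only at a fixed scale, so no uniform (Potter-type) bound is required. Given $n\,\bar F(t_n)\to\infty$, independence yields $\P(M_n<t_n)=\bigl(1-\bar F(t_n)\bigr)^n\le\exp(-n\,\bar F(t_n))\to 0$, i.e. $\P(M_n\ge t_n)\to 1$.

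Next I would split $\E M_n=\E[M_n\mathbf{1}_{\{M_n\ge t_n\}}]+\E[M_n\mathbf{1}_{\{M_n<t_n\}}]$. On $\{M_n\ge t_n\}$ we have $M_n\ge t_n>0$, so the first term is at least $t_n\,\P(M_n\ge t_n)$. For the second term, observe that on $\{M_n<0\}$ all the $X_i$ are negative, whence $|M_n|=\min_i|X_i|\le|X_1|$ and so $M_n\mathbf{1}_{\{M_n<0\}}\ge-|X_1|$, while the part on $\{0\le M_n<t_n\}$ is nonnegative; this gives $\E[M_n\mathbf{1}_{\{M_n<t_n\}}]\ge-\E|X|$, which is finite by assumption 1). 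Combining the two bounds and dividing by $g(1/n)$,
\[
   \frac{\E M_n}{g(1/n)}\ge(1-\epsilon)\,\P(M_n\ge t_n)-\frac{\E|X|}{g(1/n)}.
\]
Letting $n\to\infty$ and using $\P(M_n\ge t_n)\to 1$ together with $g(1/n)\to\infty$ gives $\liminf_n \E M_n/g(1/n)\ge 1-\epsilon$; since $\epsilon$ was arbitrary, $\liminf_n \E M_n/g(1/n)\ge 1$, which is exactly the asserted lower bound.

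I expect the main obstacle to be the tail estimate $n\,\bar F(t_n)\to\infty$, since this is the single point where the slow-variation hypothesis does real work and where the subtleties of the generalized inverse (the gap between $\bar F(g(p))$ and $p$, and the behaviour of $\bar F$ just below the level $(1-\epsilon)g(1/n)$) must be handled with care. By comparison, the lower-tail control and the concluding limit argument are routine.
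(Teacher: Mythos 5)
Your proof is correct. It shares the overall skeleton of the paper's argument --- truncate at a level slightly below $g(1/n)$, bound $\E M_n$ from below by that level times the probability of exceeding it, and absorb the negative part into the finite constant $\E|X|$ guaranteed by assumption 1) --- but the two proofs distribute the work differently. The paper takes the threshold $R_n=g(V/n)$ for a fixed $V$, so the tail bound $\P(X\ge R_n)\ge V/n$ is immediate from the definition of the quantile function and yields only $\P(M_n\ge R_n)\ge 1-e^{-V}$; slow variation is then used directly to identify $g(V/n)=g(1/n)(1+o(1))$, and a final passage $V\nearrow\infty$ removes the factor $1-e^{-V}$. You instead take the threshold $t_n=(1-\epsilon)g(1/n)$, so the comparison with $g(1/n)$ is trivial, and the work is shifted to the tail estimate $n\,\bar F(t_n)\to\infty$, which you obtain by a clean contradiction argument: boundedness along a subsequence would force $g(C/n_k)\le(1-\epsilon)g(1/n_k)$, contradicting slow variation at the fixed scale $C$. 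Your route gives the slightly more informative intermediate statement $\P\bigl(M_n\ge(1-\epsilon)g(1/n)\bigr)\to 1$ (a lower bound in probability, not just in expectation), and it uses slow variation only pointwise at a fixed scale, exactly as the paper does; neither proof needs uniform Potter-type bounds here. All the delicate points you flagged --- the one-sided inequality $g(p)\le r$ whenever $\bar F(r)\le p$, the positivity of $t_n$ for large $n$, and the lower-tail control via $M_n\ge -|X_1|$ on $\{M_n<0\}$ --- are handled correctly.
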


\begin{proof}
For every $R>0$ we have
\begin{eqnarray} \nonumber
\E M_n &=& \int_0^\infty \P(M_n\ge r) dr -  \int_0^\infty \P(M_n\le - r) dr
\\ \label{e21}
&\ge& R \, \P(M_n\ge R) -  \int_0^\infty \P(M_n\le - r) dr.
\end{eqnarray}

Let us fix some $V > 0$ and let $R = R_n := g(V/n)$. Then
$\P(X \ge R)\ge \frac{V}{n}$ and
\begin{eqnarray*}
   \P(M_n\ge R) &=& 1-\P(M_n<R) = 1- \P(X<R)^n
\\
   &=& 1- (1-\P(X\ge R))^n
   \ge 1- \exp\left(-n \, \P(X\ge R)\right)
\\
  &\ge& 1 -   \exp\left(-n \, \tfrac{V}{n}  \right)
  =  1 -   \exp\left(- V  \right).
\end{eqnarray*}
By using slow variation property, we obtain
\[
    R \, \P(M_n\ge R) \ge g(V/n) \left(1 -   \exp\left(- V \right) \right)
    = g(1/n)\, (1+ o(1))\, \left(1 - \exp\left(- V \right) \right).
\]

On the other hand, the integrals in \eqref{e21} are uniformly bounded because
\begin{eqnarray*}
    && \int_0^\infty \P(M_n\le - r) dr \le \int_0^\infty \P(X_1\le - r) dr \\
    &\le& \int_0^\infty \P(|X_1|\ge r) dr = \E |X| < \infty,
\end{eqnarray*}
by theorem's assumption.

Now \eqref{e21} yields
\[
   \E M_n \ge  g(1/n)\, (1+ o(1)) \left(1 - \exp\left(- V \right) \right)  - O(1).
\]
Using that $g(1/n)\nearrow\infty$ and letting $V\nearrow \infty$ we obtain
\[
   \E M_n \ge  g(1/n) \, (1+ o(1)).
\]
\end{proof}
\bigskip

\subsection{Upper bound}
\begin{prop}
 Under theorem's assumptions, it is true that
\[
   \E M_n \le  g(1/n) \, (1+ o(1)),\quad \quad as\,\,n\to \infty.
\]
\end{prop}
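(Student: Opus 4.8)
The plan is to bound $\E M_n$ from above by splitting the expectation according to whether $M_n$ exceeds a suitable threshold $R=R_n$. The natural choice, mirroring the lower bound, is to take $R_n=g(V/n)$ for a large constant $V$, but now with $V$ eventually sent to $0^+$ rather than $\infty$; more precisely I would write, for any fixed $V>0$,
\begin{equation}
   \E M_n = \E\left[M_n \, \ed{M_n \le R_n}\right] + \E\left[M_n \, \ed{M_n > R_n}\right],
\end{equation}
and then control each term separately. The first term is at most $R_n = g(V/n)$, and by slow variation $g(V/n)=g(1/n)(1+o(1))$, so it already contributes the desired main order $g(1/n)(1+o(1))$ provided $V$ can be taken close to $1$. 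The whole issue is therefore to show that the overshoot term $\E[M_n \, \ed{M_n>R_n}]$ is of smaller order than $g(1/n)$.

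To estimate the overshoot I would use the layer-cake representation together with the union bound $\P(M_n>r)\le n\,\P(X>r)$. Writing $\E[M_n\,\ed{M_n>R_n}]$ via $\int$ of $\P(M_n>r)$ over the region $r>R_n$ (plus the boundary contribution $R_n\,\P(M_n>R_n)\le R_n$), the key quantity becomes
\begin{equation}
   \int_{R_n}^\infty \P(M_n>r)\,dr \le n \int_{R_n}^\infty \P(X>r)\,dr.
\end{equation}
The plan is to analyze this tail integral through the quantile function $g$. Since $g$ is the (generalized) inverse of $p\mapsto \P(X>\cdot)$, substituting $r=g(p)$ turns $\int_{R_n}^\infty \P(X>r)\,dr$ into an integral of the form $\int_0^{V/n} p\,d(-g(p))$ or, after integration by parts, $\int_0^{V/n} g(p)\,dp$ up to boundary terms. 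The slow variation of $g$ at $0$ is exactly what makes $\int_0^{t} g(p)\,dp \sim t\,g(t)$ as $t\to 0$ (a Karamata-type estimate), so $n\int_{R_n}^\infty \P(X>r)\,dr$ is of order $n\cdot \frac{V}{n}\,g(V/n)=V\,g(1/n)(1+o(1))$.

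Combining the pieces gives $\E M_n \le g(V/n) + C\,V\,g(1/n)(1+o(1)) = g(1/n)\,(1+o(1))(1+CV)$ for a constant $C$ independent of $V$. Letting $V\to 0^+$ after $n\to\infty$ then collapses the excess factor and yields $\E M_n \le g(1/n)(1+o(1))$, as required. The main obstacle I anticipate is making the Karamata/slow-variation step rigorous: one must justify that the contribution to $\int_0^{V/n} g(p)\,dp$ coming from very small $p$ (where $g$ blows up but $\E|X|<\infty$ keeps the integral finite) is genuinely negligible, and that the boundary terms arising in the integration by parts do not secretly carry a full factor of $g(1/n)$. Controlling the interplay between the integrability assumption $\E|X|<\infty$ near $p=0$ and the slow variation estimate away from $0$ is where the care is needed; a uniform version of the representation theorem for slowly varying functions, or a direct truncation of the integral at some intermediate scale, should close the gap.
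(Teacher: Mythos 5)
Your proposal is correct in substance but reaches the upper bound by a genuinely different route than the paper. The paper fixes $R_n=g(1/n)$ outright, bounds $\P(M_n\ge r)\le 2n\,\P(X\ge r)$, and then uses the Karamata \emph{representation} theorem to show $\P(X\ge g(\rho)h)\le \big(h/(1+\delta)\big)^{-V}\rho$ for $h\ge 1+\delta$, so that the normalized tail integral $n\int_1^\infty \P(X\ge R_nh)\,dh$ is $o(1)$ after sending $V\to\infty$ and $\delta\to 0$. You instead take $R_n=g(V/n)$, pass to the quantile representation $\int_{R_n}^\infty\P(X>r)\,dr=\E(X-R_n)^+=\int_0^1\big(g(p)-R_n\big)^+dp\le\int_0^{V/n}g(p)\,dp$, and invoke the \emph{direct half of Karamata's integral theorem}, $\int_0^t g(p)\,dp\sim t\,g(t)$, to get $n\int_{R_n}^\infty\P(X>r)\,dr\le V\,g(1/n)(1+o(1))$, finishing with $V\to 0^+$. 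The worry you flag does close cleanly: applying Karamata's theorem to the slowly varying function $u\mapsto g(1/u)$ at infinity (index $-2<-1$ after the substitution $p=1/u$) needs only local boundedness, and integrability of $g$ near $0$ is guaranteed by $\E|X|<\infty$ (indeed $\int_0^1|g(p)|\,dp=\E|X|$), so no boundary terms survive; no integration by parts is needed if you use $\E(X-R)^+$ directly. Your version also gets by with the plain union bound $\P(M_n>r)\le n\,\P(X>r)$ rather than the paper's $2n\,\P(X\ge r)$, and arguably isolates the role of slow variation more transparently. One bookkeeping slip must be repaired: you bound the first term by $R_n$ \emph{and} the boundary contribution $R_n\,\P(M_n>R_n)$ by $R_n$, which as written yields $2\,g(1/n)(1+o(1))$, off by a factor of $2$. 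Either combine the two as $R_n\,\P(M_n\le R_n)+R_n\,\P(M_n>R_n)=R_n$ (equivalently, just write $\E M_n\le R_n+\int_{R_n}^\infty\P(M_n\ge r)\,dr$ from the layer-cake formula, as the paper does), or note that $\P(M_n>g(V/n))\le n\cdot V/n=V$ so the boundary term is at most $V\,R_n$ and is absorbed when $V\to 0^+$.
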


\begin{proof}
For every $R > 0$ we have
\begin{eqnarray} \nonumber
    \E M_n &\le&   \int_0^\infty \P(M_n\ge r) \, dr \le  \int_0^R 1 \, dr + \int_R^\infty \P(M_n\ge r) \, dr
\\  \label{e31}
    &=& R \left( 1+ \int_1^\infty \P(M_n\ge Rh) \, dh\right).
\end{eqnarray}

We will now show that under the choice $R= R_n:= g(1/n)$ the integral is asymptotically negligible.

Recall the elementary inequalities
\begin{eqnarray*}
   1-p  &\ge& e^{-2p}, \qquad 0<p<1/2,
\\ 
   u    &\ge& 1- e^{-u}, \qquad u>0.
\end{eqnarray*}
By combining them, for $p\in(0,1/2)$ we have
\[
   1-(1-p)^n \le 1- e^{-2np} \le 2np.
\]
Applying this inequality to $p:=\P(X\ge r)$ we have, for all $r$ large enough,
\[
  \P(M_n\ge r) =1 -(1-\P(X\ge r))^n \le 2n\, \P(X\ge r).
\]
Therefore, for all $R$ large enough and all $n\in \N$,
\begin{equation} \label{e43}
  \int_1^\infty \P(M_n\ge Rh)\, dh
  \le 2n \int_1^\infty \P(X \ge Rh) \, dh.
\end{equation}

Next, we use the slow variation property of the function $g$.
By Karamata representation theorem \cite{Bing}, for every  fixed $\delta>0$ and
$V>0$ we have, for all $\rho$ small enough and all $\ell\le 1$
\[
   \frac{g(\ell \rho)}{g(\rho)} < (1+\delta) \ell^{-1/V}.
\]
Let $h\ge 1+\delta$. Then choose $\ell:= \left(\frac{h}{1+\delta}\right)^{-V}\le 1$
which satisfies
\[
   (1+\delta) \, \ell^{-1/V}= (1+\delta) \, \frac{h}{1+\delta} = h.
\]
We obtain $g(\ell \rho) < h \, g(\rho)$. It follows that
\[
  \P\left( X\ge g(\rho)h \right) \le \P\left( X > g(\ell \rho) \right) \le \ell \rho
  =\left(\frac{h}{1+\delta}\right)^{-V} \rho.
\]
Integration  of this bound yields for $V>1$
\begin{eqnarray*} 
  \int_{1+\delta}^\infty  \P\left( X\ge g(\rho) h\right) \, dh
  &\le& \rho  \int_{1+\delta}^\infty \left(\frac{h}{1+\delta}\right)^{-V}\, dh
\\  
  &=& \rho \, (1+\delta) \int_1^\infty \tilde{h}^{-V}d\tilde{h}
  = \frac{\rho(1+\delta)}{V-1}.
\end{eqnarray*}
We choose here $\rho=\rho_n:=1/n$ and $R=R_n:=g(\rho_n)=g(1/n)$
to get
\[
  n \int_{1+\delta}^{\infty} \P(X\ge  R_n h)\, dh \le n \cdot \frac{1+\delta}{n(V-1)}
  =\frac{1+\delta}{V-1}.
\]
for all $n$ large enough. By letting $V\nearrow \infty$, we have
\[
     n \int_{1+\delta}^{\infty} \P(X\ge  R_n h) \, dh = o(1), \qquad \textrm{as } n\to \infty,
\]
for every fixed $\delta$.

Using $R_n=g(1/n)$ we also have a trivial bound
\[
     n \int_1^{1+\delta} \P(X\ge  R_n h)\, dh \le n \, \delta \, \P(X> R_n) \le n\, \delta\, \frac1n =\delta.
\]
By summing up the bounds for two integrals, we obtain
\[
    n \int_{1}^{\infty} \P(X\ge R_n h) \, dh \le \delta + o(1),
\]
and letting $\delta\searrow 0$ we have
\[
    n \int_{1}^{\infty} \P(X\ge  R_n h)\, dh = o(1).
\]
By combining this estimate with \eqref{e43} and \eqref{e31} we finally get
\[
   \E M_n \le R_n (1+o(1)) = g(1/n) (1+o(1)).
\]
\end{proof}


\section{Relations between $M_n$ and $\MM_n$}
\label{s:MM}

The goal of this section is to connect the expectations of  $M_n$ and $\MM_n$
and to show that under theorem's assumptions \eqref{e11} implies \eqref{e12}.

We start with a trivial estimate
\[
  \MM_n\le \sum_{i=1}^{n} \max_{1\le j\le n} X_{ij} := \sum_{i=1}^{n} M_n^{(i)},
\]
where $M_n^{(i)}$ are independent copies of $M_n$. Hence, using \eqref{e11},
\begin{equation} \label{MM1}
   \E \MM_n\le  \sum_{i=1}^{n}\E M_n^{(i)} = n\, \E M_n = n\, g(1/n) \, (1+o(1)).
\end{equation}

In order to get an opposite bound, we have to recall the "greedy method" of constructing a quasi-optimal permutation $\pi^*$ providing sufficiently large value of the assignment process. In Gaussian context,  this method was introduced in \cite{MordantSegers} and later in \cite{Lifshits2021GAP}. 

Let 
$[i] := \{1, 2, \dots, i\}$.
Define
\[
    \pi^*(1) := \arg \max\limits_{j\in[n]} X_{1j},
\]
and let for all  $i=2,\dots, n$
\[
    \pi^*(i) := \arg \max\limits_{j \not\in \pi^*([i-1])} X_{ij}.
\]

It is natural to call this strategy greedy, because at every step we consider the row $i$, take the maximum of its available elements (without considering the influence of this choice on subsequent steps) and then forget the row $i$ and the corresponding column  $\pi^*(i)$.

We stress that at every step we make choice from i.i.d. random variables
with distribution $\PP$. They are also independent on the variables used at other steps but we don't need this fact here. The number of variables used at consequent steps is decreasing from $n$ to $1$.

By using the greedy method, we have
\[
   \MM_n \ge \sum_{i=1}^{n} X_{i\,\pi^*(i)}
   =  \sum_{i=1}^{n} \max\limits_{j \not\in \pi^*([i-1])} X_{ij}
   :=  \sum_{i=1}^{n} L_i,
\]
where $L_i$ is equidistributed with $M_{n-i+1}$.

Hence, for every $\delta\in (0,1)$
\begin{eqnarray*}
   \E \MM_n &\ge& \sum_{i=1}^{n} \E L_i  = \sum_{i=1}^{n} \E M_i
\\
     &\ge& (n-[\delta n])\ \E M_{[\delta n]} + \delta n \, \E M_1
\\
  &=& (1-\delta) n\, g(1/[\delta n])\, (1+o(1)) + O(n)
\\
  &=&  (1-\delta) n  \, g(1/n)\, (1+o(1)),
\end{eqnarray*}
where we consequently used that the sequence $(\E M_n)$ is non-decreasing,
the asymptotics \eqref{e11}, the slow variation property of the function $g$,
and the fact that  $g(1/n)\nearrow\infty$ as $n\to \infty$.

Since $\delta$ can be chosen arbitrarily small, we obtain
\begin{equation} \label{MM2}
  \E \MM_n \ge n \ g(1/n) \, (1+o(1)).
\end{equation}

Now inequalities \eqref{MM1} and \eqref{MM2} prove that \eqref{e11} implies
\eqref{e12}. 


\section{Examples}
In this section we provide the main asymptotic terms of $\E M_n$ and $\E \mathcal{M}_n$ as $n \to \infty$ for some standard distributions satisfying assumptions of Theorem \ref{t:main_asymptotics}.

\begin{center}
\begin{tabular}{ |c|c|c|c| } 
\hline
Distribution $\mathcal{P}$ & Tail quantile function $g(p)$ & $\E M_n$ & $\E \mathcal{M}_n$ \\
    \hline
    $\mathcal{N}\, (0, 1)$ & $\sqrt{2}\,\mathrm{erfc}^{-1} (2p)$ & $\sqrt{2 \log{n}}$ & $n\sqrt{2 \log{n}}$ \\ 
    $\mathrm{Exp}\,(1)$ & $-\log{p}$ & $\log{n}$ & $n\log{n}$\\ 
    $\mathrm{Gumbel}\,(0, 1)$ & $-\log{\log{\frac{1}{1-p}}}$ & $\log{n}$ & $n\log{n}$\\ 
    $\mathrm{Laplace}\,(0, 1)$ & $ -\log{(2p) },\, 0 < p < 1/2$ & $\log{n}$ & $n\log{n}$ \\
   
   $\mathrm{Poisson}\,(\lambda)$ & $
   -\log{p}/\log{\log{(1/p)}}$ & $\log{n}/\log{\log{n}} $ & $n \log{n} / \log{\log{n}}$ \\
    \hline
\end{tabular}
\end{center}
\section*{Acknowledgement} 
We are grateful to Professor Th.\,Tkotz for sending us preprint \cite{TTkocz}
and for other useful advise.

The work supported by Russian Science Foundation Grant [21-11-00047].
 

\end{document}